\theoremstyle{plain}
\newtheorem{theorem}{Theorem}[section]
\newtheorem{remark}{Remark}[section]
\newtheorem{proposition}{Proposition}[section]
\title[Eigenvalues of $\mathfrak L$ operator on Self-Shrinkers]
{Estimates for eigenvalues of $\mathfrak L$ operator \\ on Self-Shrinkers*}
\author {Qing-Ming Cheng and Yejuan Peng}
\address{Qing-Ming Cheng \\  \newline \indent Department of Applied Mathematics, Faculty of Sciences, \newline \indent Fukuoka University, Fukuoka  814-0180, Japan. cheng@fukuoka-u.ac.jp}
\address{Yejuan Peng \\  \newline \indent Department of Mathematics,  Graduate School of Science and Engineering \\  \newline \indent Saga University, Saga 840-8502,  Japan. yejuan666@gmail.com}
\begin{document}
\maketitle

\begin{abstract}
In this paper, we study eigenvalues of the closed eigenvalue problem of 
the differential operator $\mathfrak L$, which is introduced
by Colding and Minicozzi in \cite{CM1}, on an $n$-dimensional
compact self-shrinker in $\mathbf {R}^{n+p}$. Estimates for eigenvalues of  
the differential operator $\mathfrak L$ are obtained.
Our estimates for eigenvalues of the differential operator $\mathfrak L$ are sharp.
Furthermore, we also study  the Dirichlet eigenvalue problem of the differential operator $\mathfrak L$
on a bounded domain with a piecewise smooth boundary in an $n$-dimensional complete self-shrinker 
in $\mathbf {R}^{n+p}$.
For Euclidean space $\mathbf {R}^{n}$,  the differential operator $\mathfrak L$ 
becomes  the Ornstein-Uhlenbeck operator in stochastic analysis. Hence, we also give estimates
for eigenvalues of the Ornstein-Uhlenbeck operator.
\end{abstract}
\footnotetext{{\it Key words and phrases}:
mean curvature flows, self-shrinkers, spheres, the differential operator $\mathfrak L$ and eigenvalues}
\footnotetext{2010 \textit{Mathematics Subject
Classification}:  58G25, 53C40.}

\footnotetext{* Research partially Supported by JSPS Grant-in-Aid for
Scientific Research (B) No. 24340013.}

\section{introduction} 
\noindent
Let $X: M^n\to \mathbf{R}^{n+p}$ be an isometric immersion from  an $n$-dimensional  Riemannian 
manifold $M^n$ into a Euclidean space $\mathbf{R}^{n+p}$.  One considers a smooth one-parameter
family of immersions:
$$
F(\cdot, t):M^n\to  \mathbf{R}^{n+p}
$$
satisfying $F(\cdot, 0)=X(\cdot)$ and 
\begin{equation}
\bigl(\dfrac{\partial F(p,t)}{\partial t}\bigl)^N=H(p,t), \quad (p,t)\in M\times [0,T),
\end{equation}
where $H(p,t)$ denotes the mean curvature vector  of submanifold $M_t=F(M^n,t)$ at point $F(p,t)$.
The equation (1.1) is called the mean curvature flow equation.
A submanifold $X: M^n\to \mathbf{R}^{n+p}$ is said to be a self-shrinker in $\mathbf{R}^{n+p}$ if it satisfies
\begin{equation}
H=-X^N,
\end{equation}
where $X^N$ denotes the orthogonal projection into the normal bundle of $M^n$ (cf. Ecker-Huisken \cite{EH}).
Self-shrinkers play an important role in the study of the mean curvature flow since they are not only 
solutions of the mean curvature flow equation, but they also describe all possible blow ups at a given
singularity of a mean curvature flow. Huisken \cite{H1} proved that the sphere of radius $\sqrt n$ is the only closed
embedded self-shrinker hypersurfaces with non-zero mean curvature. For classifications of complete non-compact 
embedded self-shrinker hypersurfaces,
Huisken \cite{H2} and Colding and Minicozzi \cite{CM1}  proved that an $n$-dimensional  complete 
embedded self-shrinker hypersurface with non-negative mean curvature and polynomial volume growth in  $\mathbf{R}^{n+1}$ is  a Riemannian product $S^k\times \mathbf{R}^{n-k}$,  $0\leq k < n$. Smoczyk \cite{S1} has obtained several results for complete self-shrinkers with higher co-dimensions.\par
\noindent
For study of the rigidity problem for self-shrinkers,   Le and Sesum \cite{LS} and Cao and Li \cite{CL}
have classified   $n$-dimensional  complete embedded self-shrinkers in $\mathbf{R}^{n+p}$ with  polynomial volume growth if the squared norm $|A|^2$ of the second fundamental form satisfies  $|A|^2\leq 1$. For a further study,
see  Colding and Minicozzi \cite{CM2, CM3}, Ding and Wang \cite{DW}, Ding and Xin \cite{DX1, DX2}, Wang \cite{W}  and so on.
\vskip2mm
\noindent
In \cite{CM1}, Colding and Minicozzi introduced a differential operator   $\mathfrak L$ and used it to 
study self-shrinkers. The differential operator $\mathfrak L$ is defined by
\begin{equation}
\mathfrak L f=\Delta f -\langle X,\nabla f\rangle
\end{equation}
for a smooth function $f$, where $\Delta$ and $\nabla$ denote the Laplacian and the gradient 
operator on the self-shrinker, respectively and $\langle \cdot, \cdot \rangle$ denotes the standard
inner product of $\mathbf R^{n+p}$.  We should notice that the differential operator $\mathfrak L$
plays a very important role in studying of $n$-dimensional  complete embedded self-shrinkers in 
$\mathbf{R}^{n+p}$ with  polynomial volume growth  in order to guarantee integration by part holds
as in \cite{CM1}.
\vskip2mm
\noindent
The purpose of  this paper is to  study eigenvalues of the closed eigenvalue problem for 
 the differential operator $\mathfrak L$ on compact 
self-shrinkers  in $\mathbf{R}^{n+p}$ in sections 3 and 4 and  eigenvalues of the Dirichlet
eigenvalue problem of  the differential operator $\mathfrak L$ on a bounded domain with a piecewise smooth boundary  in complete
self-shrinkers  in $\mathbf{R}^{n+p}$ in section 5.  I shall adapt the idea of Cheng and Yang in \cite{CY1} for studying eigenvalues of the Dirichlet
eigenvalue problem of  the Laplacian $\Delta $  to  the differential operator $\mathfrak L$ by constructing appropriated trial functions
for the differential operator $\mathfrak L$.  
Since the differential operator $\mathfrak L$ is self-adjoint
with respect to  measure $e^{-\frac{|X|^2}{2}}dv$, where $dv$ is the volume element of $M^n$
and $|X|^2=\langle X, X\rangle$,
we know that the closed eigenvalue problem:
\begin{equation}
\mathfrak L u=- \lambda u \quad \text{on} \ M^n
\end{equation}
for the differential operator $\mathfrak L$ on compact 
self-shrinkers  in $\mathbf{R}^{n+p}$ has  a real and  discrete spectrum: 
\begin{equation*}
0=\lambda_0<\lambda _{1} \leq  \lambda _{2} \leq \cdots \leq \lambda _{k} \leq \cdots 
 \longrightarrow \infty,
\end{equation*} 
where each eigenvalue is repeated according to  its multiplicity. 
We shall prove the following:

\vskip 2mm
\noindent
\begin{theorem} Let $M^n$ be an $n$-dimensional compact 
self-shrinker  in $\mathbf{R}^{n+p}$. Then, 
eigenvalues of the closed eigenvalue problem ${\rm (1.4)}$
satisfy
\begin{equation}
\sum_{i=0}^k (\lambda_{k+1}-\lambda_{i})^2 \leq \frac{4}{n}\sum_{i=0}^k
(\lambda_{k+1}-\lambda_{i})(\lambda_{i}+\frac{2n-\min_{M^n}{|X|^2}}{4}).
\end{equation} 
\end{theorem}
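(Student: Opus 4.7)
The plan is to adapt the Cheng--Yang test-function technique from \cite{CY1} to the weighted setting $\mu := e^{-|X|^2/2}dv$, using the restrictions of the ambient coordinate functions $x_1,\ldots,x_{n+p}$ to $M^n$ as pivots. The starting observation is that each $x_\alpha$ is itself an eigenfunction of $\mathfrak L$: combining $\Delta X = H$ with the self-shrinker equation $H = -X^N$ gives $\Delta x_\alpha = -(X^N)_\alpha$, while $\langle X,\nabla x_\alpha\rangle = \langle X,e_\alpha^T\rangle = (X^T)_\alpha$, so $\mathfrak L x_\alpha = -x_\alpha$. Elementary linear algebra also yields $\sum_\alpha x_\alpha^2 = |X|^2$, $\sum_\alpha|\nabla x_\alpha|^2 = n$, and (because the tangential parts $\{e_\alpha^T\}$ realise the identity on $TM$) the Parseval identity $\sum_\alpha\int \langle \nabla x_\alpha,\nabla u_i\rangle^2\,\mu = \int |\nabla u_i|^2\,\mu = \lambda_i$.

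Let $\{u_i\}$ be a $\mu$-orthonormal eigenbasis, set $r_{\alpha ij} := \int x_\alpha u_i u_j\,\mu$, and form the trial functions $\varphi_{\alpha i} := x_\alpha u_i - \sum_{j=0}^{k} r_{\alpha ij}\, u_j$, which are orthogonal in $L^2(M^n,\mu)$ to $u_0,\ldots,u_k$. The product rule $\mathfrak L(fg) = f\mathfrak L g + g\mathfrak L f + 2\langle\nabla f,\nabla g\rangle$ together with $\mathfrak L x_\alpha = -x_\alpha$ gives $\mathfrak L(x_\alpha u_i) = -(\lambda_i+1)x_\alpha u_i + 2\langle\nabla x_\alpha,\nabla u_i\rangle$. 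Evaluating the Rayleigh inequality $\lambda_{k+1}\|\varphi_{\alpha i}\|_\mu^2 \leq -\int \varphi_{\alpha i}\mathfrak L\varphi_{\alpha i}\,\mu$ in two ways and summing over $\alpha$ yields
\[ (\lambda_{k+1}-\lambda_i)\int |X|^2 u_i^2\,\mu \;-\; \sum_{\alpha,\, j=0}^{k}(\lambda_{k+1}-\lambda_j)\,r_{\alpha ij}^2 \;\leq\; n, \]
together with the companion pre-summation inequality
\[ (\lambda_{k+1}-\lambda_i - 1)\,\|\varphi_{\alpha i}\|_\mu^2 \;\leq\; -2\int \varphi_{\alpha i}\,\langle \nabla x_\alpha,\nabla u_i\rangle\,\mu. \]
Both computations lean on the commutator identity $\int u_j\langle\nabla x_\alpha,\nabla u_i\rangle\,\mu = \tfrac12(\lambda_i-\lambda_j+1)r_{\alpha ij}$, obtained by equating $\int u_j\mathfrak L(x_\alpha u_i)\,\mu$ with $\int x_\alpha u_i\mathfrak L u_j\,\mu$.

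For the Yang step, multiply the second display above by $(\lambda_{k+1}-\lambda_i)$ and apply the Cauchy--Schwarz inequality to the right-hand side using $\varphi_{\alpha i}\perp u_0,\ldots, u_k$; summing over $\alpha$ and invoking the Parseval identity converts the resulting bound into an inequality of the form $(\lambda_{k+1}-\lambda_i)^2\sum_\alpha \|\varphi_{\alpha i}\|_\mu^2 \leq 4(\lambda_{k+1}-\lambda_i)\lambda_i$, modulo symmetric $r_{\alpha ij}^2$ cross-terms. Multiplying once more by $(\lambda_{k+1}-\lambda_i)$, summing over $i = 0,\ldots,k$, and combining with the first display (also multiplied by $(\lambda_{k+1}-\lambda_i)$ and summed), the cross-terms collapse by the symmetry of $r_{\alpha ij}^2$ in $(i,j)$; the pointwise bound $\int |X|^2 u_i^2\,\mu \geq \min_{M^n}|X|^2$ then folds the $n$ coming from $\sum_\alpha|\nabla x_\alpha|^2$ together with the $-1$ shift into the precise shrinker correction $(2n - \min_{M^n}|X|^2)/4$, delivering $(1.5)$.

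The main obstacle is the Cauchy--Schwarz step: because $\mathfrak L x_\alpha = -x_\alpha$ produces a $-1$ shift in the second display above, a naive estimate would yield $(\lambda_{k+1}-\lambda_i-1)^2$ rather than the desired $(\lambda_{k+1}-\lambda_i)^2$ on the left. The correct manoeuvre couples Young's inequality (with the parameter tuned against $\min_{M^n}|X|^2$) with the Parseval identity, and this coupling is precisely what upgrades the shift to the sharp correction $(2n-\min_{M^n}|X|^2)/4$ rather than producing the cruder shift by $1$ that a direct spectral-shift argument would suggest.
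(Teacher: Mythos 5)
Your construction is the same as the paper's: the trial functions $\varphi_i^A=x_Au_i-\sum_{j\le k}a_{ij}^Au_j$, the facts $\mathfrak Lx_A=-x_A$, $\sum_A|\nabla x_A|^2=n$, $\sum_A\langle\nabla x_A,\nabla u_i\rangle^2=|\nabla u_i|^2$, and your ``commutator identity'' (which is exactly the paper's $b_{ij}^A=(\lambda_i-\lambda_j)a_{ij}^A$) are precisely the ingredients used there, and your first display is the correct PPW-type consequence. The gap is at the decisive Cauchy--Schwarz/squaring step, and you have located it yourself: once you peel $\int\varphi_i^Ax_Au_i=\|\varphi_i^A\|^2$ off and work from $(\lambda_{k+1}-\lambda_i-1)\|\varphi_i^A\|^2\le-2\int\varphi_i^A\langle\nabla x_A,\nabla u_i\rangle$, the squaring trick produces the coefficient $(\lambda_{k+1}-\lambda_i-1)^2$ (and needs a sign hypothesis on $\lambda_{k+1}-\lambda_i-1$ to divide through), not $(\lambda_{k+1}-\lambda_i)^2$. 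Your proposed repair --- ``Young's inequality with the parameter tuned against $\min_{M^n}|X|^2$'' --- is asserted, not carried out, and it cannot be the mechanism: a parameterized Young step applied to $-2\int\varphi_i^A\langle\nabla x_A,\nabla u_i\rangle$ is the route to the weaker Cheng--Yang-2005-type bound, not to a Yang-first-type inequality such as $(1.5)$; and $\min_{M^n}|X|^2$ cannot enter through a tuning parameter, because by the time you would tune you have already discarded the term $\int|X|^2u_i^2\,e^{-|X|^2/2}dv$ that carries it.

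The correct manoeuvre is to never split the perturbation: keep $p_i^A:=u_i\mathfrak Lx_A+2\langle\nabla x_A,\nabla u_i\rangle=-x_Au_i+2\langle\nabla x_A,\nabla u_i\rangle$ as a single block, so the basic inequality reads $(\lambda_{k+1}-\lambda_i)\|\varphi_i^A\|^2\le W_i^A:=-\int\varphi_i^Ap_i^A\,e^{-|X|^2/2}dv$ with the clean coefficient; apply Cauchy--Schwarz to $\varphi_i^A$ against $p_i^A-\sum_jb_{ij}^Au_j$ with $b_{ij}^A$ antisymmetric; and then compute exactly
\begin{equation*}
\sum_{A=1}^{n+p}\|p_i^A\|^2=\int_{M^n}(2n-|X|^2)u_i^2\,e^{-\frac{|X|^2}{2}}dv+4\lambda_i\le\bigl(2n-\min_{M^n}|X|^2\bigr)+4\lambda_i,
\end{equation*}
where the cross term $\sum_A\int\nabla(x_A^2)\cdot\nabla(u_i^2)$ is converted by self-adjointness into $-\int u_i^2\,\mathfrak L|X|^2=-2\int(n-|X|^2)u_i^2$. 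This exact identity --- not a Young inequality --- is the source of the correction $(2n-\min_{M^n}|X|^2)/4$. Likewise $W_i^A=\int|\nabla x_A|^2u_i^2\,e^{-|X|^2/2}dv+\sum_j(\lambda_i-\lambda_j)(a_{ij}^A)^2$, so after the antisymmetry cancellation the left-hand side of the summed inequality is $n\sum_i(\lambda_{k+1}-\lambda_i)^2$; your intermediate bound with $(\lambda_{k+1}-\lambda_i)^2\sum_A\|\varphi_i^A\|^2$ on the left is not the quantity that closes the argument. As written, the proof does not go through at the one step where the theorem is actually proved, although every surrounding ingredient is in place.
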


\begin{remark}
The sphere $S^n(\sqrt n)$ of radius $\sqrt n$ is a compact self-shrinker in $\mathbf{R}^{n+p}$.
For  $S^n(\sqrt n)$ and  for any $k$, the inequality ${\rm (1.5)}$ for eigenvalues 
of the closed eigenvalue problem ${\rm (1.4)}$ becomes  equality.
Hence our results in theorem {\rm 1.1} are sharp. 
\end{remark}

\noindent
Furthermore, from the recursion formula of Cheng and Yang \cite{CY}, we can obtain an upper bound for eigenvalue
$\lambda_{k}$:
\begin{theorem} Let $M^n$ be an $n$-dimensional compact 
self-shrinker  in $\mathbf{R}^{n+p}$. Then, 
eigenvalues of the closed eigenvalue problem ${\rm (1.4)}$
satisfy, for any $k\geq 1$, 
\begin{equation*}
\lambda_{k}+\frac{2n-\min_{M^n}{|X|^2}}{4}
\leq (1+\frac {a(min\{n,k-1\})}{n})(\frac{2n-\min_{M^n}{|X|^2}}{4})k^{2/n},
\end{equation*}
where the bound of $a(m)$ can be formulated as:
$$
\left \{ \aligned
a(0)&\leq 4, \\
 a(1)&\leq 2.64,\\
 a(m)&\leq 2.2-4\log(1+\frac1{50}(m-3)),\qquad \mbox{for}\quad
 m\geq 2.
\endaligned \right .
$$
In particular, for $n\ge 41$ and $k\geq 41$, we have
 \begin{equation*}
\lambda_{k}+\frac{2n-\min_{M^n}{|X|^2}}{4}
\leq (\frac{2n-\min_{M^n}{|X|^2}}{4})k^{2/n}.
\end{equation*}
\end{theorem}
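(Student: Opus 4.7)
\noindent The plan is to feed the Yang-type inequality of Theorem 1.1 into the abstract recursion formula of Cheng and Yang from \cite{CY}. First I would perform the shift $\Lambda_i := \lambda_i + \frac{2n-\min_{M^n}|X|^2}{4}$. Since the shift is by a common constant, $\Lambda_{k+1}-\Lambda_i = \lambda_{k+1}-\lambda_i$, and inequality (1.5) rewrites as
\begin{equation*}
\sum_{i=0}^k (\Lambda_{k+1}-\Lambda_i)^2 \leq \frac{4}{n}\sum_{i=0}^k (\Lambda_{k+1}-\Lambda_i)\,\Lambda_i.
\end{equation*}
Because $\lambda_0 = 0$, the base value $\Lambda_0$ is exactly the factor $\frac{2n-\min_{M^n}|X|^2}{4}$ that appears in the theorem.

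\noindent Next, I would apply the Cheng-Yang recursion verbatim: their argument is purely algebraic and takes any non-decreasing positive sequence obeying an inequality of the form above, then produces by induction on $k$, with an optimization over an auxiliary parameter, an explicit bound
\begin{equation*}
\Lambda_k \leq \Bigl(1 + \frac{a(\min\{n,k-1\})}{n}\Bigr)\Lambda_0\, k^{2/n},
\end{equation*}
with $a(m)$ satisfying exactly the three-case estimate stated in Theorem 1.3. Translating back via $\Lambda_k = \lambda_k + \frac{2n-\min_{M^n}|X|^2}{4}$ yields the first assertion.

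\noindent For the sharper inequality under $n\geq 41$ and $k\geq 41$, I would set $m := \min\{n,k-1\}\geq 40$ and apply the third branch, yielding $a(m) \leq 2.2 - 4\log(1+\frac{37}{50})$. Since $4\log(1.74)>2.2$ (one checks $e^{0.55}\approx 1.733 < 1.74$), this gives $a(m)\leq 0$, so the coefficient $1+a(m)/n$ is at most $1$ and can be discarded, producing the second assertion.

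\noindent The main obstacle is not in the setup but inside the recursion itself; however, since the Cheng-Yang recursion is an abstract machine that uses only the algebraic shape of the eigenvalue inequality and no feature of the underlying operator, no new analysis is needed here beyond Theorem 1.1 and the shift. In that sense the whole argument reduces to identifying the correct shift so that Theorem 1.1 fits into the pre-existing framework, after which the theorem follows as a direct consequence.
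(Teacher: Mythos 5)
Your overall strategy --- shift the spectrum by $\frac{2n-\min_{M^n}|X|^2}{4}$ so that Theorem 1.1 takes the Yang form, then invoke the Cheng--Yang machinery --- is exactly the paper's route, and the shift is set up correctly (the paper writes $\mu_{i+1}=\lambda_i+\frac{2n-\min_{M^n}|X|^2}{4}$; positivity of $\mu_1$ uses $\min_{M^n}|X|^2\le n$ from Proposition 2.1, which you should at least cite). Your treatment of the case $n,k\ge 41$ via the sign of $a(m)$ is also fine.

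The genuine gap is in the middle step. The recursion formula of Cheng and Yang is \emph{not} an abstract machine that converts the Yang inequality alone into the bound with the improved constants $a(m)$. The recursion itself only yields $F_{k+1}\le C(n,k)\left(\frac{k+1}{k}\right)^{4/n}F_k$, and iterating it together with $F_1=\frac{2}{n}\mu_1^2$ gives only $\mu_{k+1}\le\left(1+\frac{4}{n}\right)k^{2/n}\mu_1$, i.e.\ the crude constant $a(0)\le 4$. Every one of the three cases that produce $a_1,a_2,a_3$ (and hence the claimed bounds $a(1)\le 2.64$ and $a(m)\le 2.2-4\log(1+\frac{m-3}{50})$) relies on an \emph{additional} input about the low-lying eigenvalues, namely
\begin{equation*}
\sum_{j=1}^{n}(\mu_{j+1}-\mu_1)=\sum_{j=1}^{n}\lambda_j\;\le\;4\mu_1=2n-\min_{M^n}|X|^2,
\end{equation*}
which is used to control $\Lambda_{n+1}$ (via $\mu_2+\dots+\mu_{n+1}\le(n+4)\mu_1$) and to run the comparison of the two bounds in the small-$k$ case. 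In the original Cheng--Yang setting this input is the classical Payne--P\'olya--Weinberger-type estimate for the Dirichlet Laplacian; it is a property of the operator, not of the algebraic shape of the inequality. In the present closed problem it must be supplied separately: the paper does so by observing that $\mathfrak{L}x_A=-x_A$ with $\int_{M^n}x_A\,e^{-|X|^2/2}dv=0$, so $1$ is an eigenvalue of multiplicity at least $n+p$, hence $\lambda_1\le\cdots\le\lambda_{n+1}\le 1$ and $\sum_{j=1}^{n}\lambda_j\le n\le 2n-\min_{M^n}|X|^2$ by Proposition 2.1. (That this input is not free is underscored by Section 5, where the Dirichlet analogue requires the separate Proposition 5.1.) Without this step your argument only establishes the weaker bound with $4$ in place of $a(\min\{n,k-1\})$, and in particular does not give the clean inequality for $n,k\ge 41$.
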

\vskip 1mm
\noindent
Results for eigenvalues of the Dirichlet eigenvalue problem of the differential operator $\mathfrak L$
are given in section 5.

\vskip 2mm
\noindent
{\bf Acknowledgements.}  We would like to express our gratitude to 
the referee for valuable  suggestions and comments.

\section{Preliminaries}
\noindent
Suppose $X:M^{n}\longrightarrow \mathbf{R}^{n+p}$ is an isometric
immersion from Riemannian manifold $M^n$ into the (n+p)-dimensional 
Euclidean space $\mathbf {R}^{n+p}$.  Let $\{E_A\}_{A=1}^{n+p}$ be the standard
basis of $\mathbf{R}^{n+p}$.  The position vector can be written by
$X=(x_1, x_2, \cdots, x_{n+p})$.  
We choose a local orthonormal frame field 
$\{e_1, e_2, \cdots,e_n, e_{n+1}, \cdots, e_{n+p} \}$ and the dual
coframe field 
$\{\omega_1, \omega_2, \cdots,\omega_{n}, \omega_{n+1}, \cdots, \omega_{n+p}\}$
along $M^n$ of  $\mathbf{R}^{n+p}$ such that  $\{e_1, e_2, \cdots,e_n \}$
is a local orthonormal basis on $M^n$. Thus,  we have
$$
\omega_{\alpha}=0, \quad n+1\leq\alpha\leq n+p
$$
on $M^n$.
From the  Cartan's lemma, we have 
$$
\omega_{i\alpha}=\sum_{j=1}^nh^{\alpha}_{ij}\omega_j, \quad  h^{\alpha}_{ij}=h^{\alpha}_{ji}.
$$
The  second fundamental form ${\bf h}$ of $M^n$ and the mean curvature vector $H$
are defined, respectively,  by 
$$
{\bf h}=\sum_{\alpha=n+1}^{n+p}\sum_{i,j=1}^n h^{\alpha}_{ij}\omega_i\otimes \omega_je_{\alpha}
\quad H=\sum_{\alpha=n+1}^{n+p}\sum_{i=1}^n h^{\alpha}_{ii}e_{\alpha}.
$$
One considers the mean curvature flow for a submanifold  $X: M^n\to \mathbf{R}^{n+p}$. Namely,  
we consider a  one-parameter family of immersions:
\begin{equation*}
F(\cdot, t):M^n\to  \mathbf{R}^{n+p}
\end{equation*}
satisfying $F(\cdot, 0)=X(\cdot)$ and 
\begin{equation}
\bigl(\dfrac{\partial F(p,t)}{\partial t}\bigl)^N=H(p,t), \quad (p,t)\in M\times [0,T),
\end{equation}
where $H(p,t)$ denotes the mean curvature vector  of submanifold $M_t=F(M^n,t)$ at point $F(p,t)$.
An important class of solutions to the mean curvature flow equation (2.1) are self-similar shrinkers,
which profiles, self-shrinkers, satisfy  
\begin{equation*}
H=-X^N,
\end{equation*}
which is a system of quasi-linear elliptic partial differential equations of the second order.
Here $X^N$ denotes the orthogonal projection of $X$  into the normal bundle of $M^n$.

In \cite{CM1}, Colding and Minicozzi introduced a differential operator   $\mathfrak L$ and used it to 
study self-shrinkers. The differential operator $\mathfrak L$ is defined by
\begin{equation}
\mathfrak L f=\Delta f -\langle X,\nabla f\rangle
\end{equation}
for a smooth function $f$, where $\Delta$ and $\nabla$ denote the Laplacian and the gradient 
operator on the self-shrinker, respectively.
For a compact self-shrinker $M^n$ without boundary,  we have 
\begin{equation*}
\begin{aligned}
&\int_{M^n}f\mathfrak L u \ e^{-\frac{|X|^2}{2}}dv\\
&=\int_{M^n}f(\Delta u -\langle X,\nabla u\rangle)\ e^{-\frac{|X|^2}{2}}dv\\
&=\int_{M^n}f\text{div}( e^{-\frac{|X|^2}{2}}\nabla u)dv\\
&=\int_{M^n}u \mathfrak L f \ e^{-\frac{|X|^2}{2}}dv,\\
\end{aligned}
\end{equation*}
that is, 
\begin{equation}
\begin{aligned}
&\int_{M^n}f\mathfrak L u \ e^{-\frac{|X|^2}{2}}dv
=\int_{M^n}u \mathfrak L f \ e^{-\frac{|X|^2}{2}}dv,\\
\end{aligned}
\end{equation}
for any smooth functions $u, f$.
Hence, the differential operator $\mathfrak L$ is self-adjoint with respect to the  measure $e^{-\frac{|X|^2}{2}}dv$.
Therefore, we know that the closed eigenvalue problem:
\begin{equation}
\mathfrak L u=- \lambda u \quad \text{on} \ M^n
\end{equation}
has  a real and  discrete spectrum: 
\begin{equation*}
0=\lambda_0<\lambda _{1} \leq  \lambda _{2} \leq \cdots \leq \lambda _{k} \leq \cdots 
 \longrightarrow \infty.
\end{equation*} 
Furthermore,  we have
\begin{equation}
\mathfrak L x_A=-x_A.
\end{equation}
In fact, 
\begin{equation*}
\begin{aligned}
&\mathfrak L x_A=\Delta \langle X, E_A\rangle-\langle X,\nabla x_A\rangle\\
& =\langle \Delta X, E_A\rangle-\langle X,E_A^T\rangle\\
& =\langle H, E_A\rangle-\langle X,E_A^T\rangle\\
& =-\langle X^N, E_A\rangle-\langle X,E_A^T\rangle=-x_A.
\end{aligned}
\end{equation*}
Denote the induced metric by $g$ and define $\nabla u\cdot\nabla v=g(\nabla u,\nabla v)$ for 
functions $u, v$.
We get,  from (2.5), 
\begin{equation}
\mathfrak L |X|^2=\sum_{A=1}^{n+p}\bigl(2x_A\mathfrak L x_A+2\nabla x_A\cdot\nabla x_A\bigl)=2(n-|X|^2).
\end{equation}
Here we have used 
\begin{equation*}
\sum_{A=1}^{n+p}\nabla x_A\cdot\nabla x_A=n.
\end{equation*}
\begin{proposition} For an $n$-dimensional compact  self-shrinker $M^n$  without boundary in $\mathbf{R}^{n+p}$,
we have 
\begin{equation*}
 \min_{M^n}|X|^2\leq n=\dfrac{\int_{M^n}|X|^2 \ e^{-\frac{|X|^2}{2}}dv}
{\int_{M^n} e^{-\frac{|X|^2}{2}}dv}\leq \max_{M^n}|X^N|^2.
\end{equation*}
\end{proposition}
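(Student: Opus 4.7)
The plan is to establish the middle equality first, then read off the two inequalities as easy corollaries, using only the identity $\mathfrak L|X|^{2}=2(n-|X|^{2})$ from $(2.6)$ together with the self-adjointness $(2.3)$ of $\mathfrak L$ and elementary critical-point analysis.

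First I would handle the equality. Taking $f=1$ and $u=|X|^{2}$ in the self-adjoint identity $(2.3)$, and noting that $\mathfrak L 1=0$, we obtain
\begin{equation*}
\int_{M^{n}}\mathfrak L|X|^{2}\,e^{-\frac{|X|^{2}}{2}}dv \;=\; \int_{M^{n}}|X|^{2}\,\mathfrak L 1\,e^{-\frac{|X|^{2}}{2}}dv \;=\; 0.
\end{equation*}
Substituting $\mathfrak L|X|^{2}=2(n-|X|^{2})$ from $(2.6)$ yields
\begin{equation*}
n\int_{M^{n}}e^{-\frac{|X|^{2}}{2}}dv \;=\; \int_{M^{n}}|X|^{2}\,e^{-\frac{|X|^{2}}{2}}dv,
\end{equation*}
which is precisely the middle equality.

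Next, the left inequality is immediate: the minimum of $|X|^{2}$ is certainly no larger than any weighted average of $|X|^{2}$, and the middle term is exactly such a weighted average equal to $n$.

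For the right inequality I would look at the function $|X|^{2}$ at a point $p_{0}\in M^{n}$ where it attains its maximum. On the one hand, $\nabla |X|^{2}=2X^{T}$, so criticality at $p_{0}$ forces $X^{T}(p_{0})=0$, which together with $|X|^{2}=|X^{T}|^{2}+|X^{N}|^{2}$ gives $|X|^{2}(p_{0})=|X^{N}|^{2}(p_{0})$. On the other hand, at a maximum of a smooth function $\Delta|X|^{2}(p_{0})\le 0$, and since $\langle X,\nabla|X|^{2}\rangle(p_{0})=0$ too, we get $\mathfrak L|X|^{2}(p_{0})\le 0$. Combining with $(2.6)$ gives $n\le|X|^{2}(p_{0})=|X^{N}|^{2}(p_{0})\le\max_{M^{n}}|X^{N}|^{2}$.

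Nothing in this argument looks technically hard; the only conceptual step is recognising that the vanishing of $X^{T}$ at a critical point of $|X|^{2}$ lets one upgrade the bound $\max|X|^{2}\ge n$ (which is what $(2.6)$ and the maximum principle give directly) into the stronger bound $\max|X^{N}|^{2}\ge n$. This is the one place where a reader might pause, so I would make it explicit.
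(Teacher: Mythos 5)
Your proof is correct. The middle equality and the left inequality are obtained exactly as in the paper: integrate $\mathfrak L|X|^2=2(n-|X|^2)$ against the weight $e^{-|X|^2/2}dv$ using self-adjointness, then bound the weighted average below by the minimum. For the right inequality the paper instead quotes the identity $\Delta|X|^2=2(n+\langle X,H\rangle)=2(n-|X^N|^2)$, which already has $|X^N|^2$ built in, and concludes $n\le\max_{M^n}|X^N|^2$ (e.g.\ by the maximum principle or by integrating over the compact $M^n$); you reconstruct the same conclusion from $(2.6)$ alone by working at a maximum point $p_0$ of $|X|^2$, where $\nabla|X|^2=2X^T=0$ kills both the drift term and the tangential part of $X$, so $\mathfrak L|X|^2(p_0)=\Delta|X|^2(p_0)\le 0$ gives $n\le|X|^2(p_0)=|X^N|^2(p_0)$. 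The two routes are equivalent in substance; yours is slightly more self-contained (it does not need $(2.7)$), while the paper's is shorter once $(2.7)$ is available. Your explicit remark that $X^T=0$ at the critical point is exactly the step the paper's terse phrasing leaves implicit, and it is worth spelling out as you do.
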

\begin{proof}
Since $\mathfrak L$ is self-adjoint with respect to the  measure $e^{-\frac{|X|^2}{2}}dv$, from (2.6), we have
$$
n\int_{M^n} e^{-\frac{|X|^2}{2}}dv=\int_{M^n}|X|^2 \ e^{-\frac{|X|^2}{2}}dv
\geq \min_{M^n}|X|^2\int_{M^n} e^{-\frac{|X|^2}{2}}dv.
$$
Furthermore, since
\begin{equation}
\Delta |X|^2=2(n+\langle X, H\rangle)=2(n-|X^N|^2),
\end{equation}
we have
$$
n\leq \max_{M^n}|X^N|^2.
$$
It completes the proof of this proposition.
\end{proof}

\section{Universal estimates for eigenvalues}
\noindent
In this section, we give proof of the theorem 1.1.
In order to prove our theorem 1.1, we need to construct trial functions.
Thank to $\mathfrak L X=-X$. We can use  coordinate functions of the position vector $X$ 
of the  self-shrinker  $M^n$ to construct trial functions.

\vskip2mm
\noindent
{\it Proof of Theorem {\rm1.1}}.
For an $n$-dimensional compact  self-shrinker $M^n$  in $\mathbf{R}^{n+p}$,
the closed eigenvalue problem:
\begin{equation}
\mathfrak L u=- \lambda u \quad \text{on} \ M^n
\end{equation}
for the differential operator $\mathfrak L$ has a discrete spectrum.
For any integer $j\geq 0$, let $u_j$ be an eigenfunction corresponding to the eigenvalue $\lambda_j$
such that
\begin{equation}
\begin{cases}
\begin{aligned}
&\mathfrak L u_j=- \lambda_j u_j \quad \text{on} \ M^n\\
&\int_{M^n}u_iu_j \ e^{-\frac{|X|^2}{2}}dv=\delta_{ij}, \ \text{for any} \ i, j. 
\end{aligned}
\end{cases}
\end{equation}
From the Rayleigh-Ritz inequality, we have
\begin{equation}
\lambda_{k+1} \leq \frac{\displaystyle{-\int_{M^n}}\varphi\mathfrak L \varphi  \ e^{-\frac{|X|^2}{2}}dv}
{\displaystyle{\int_{M^n}}\varphi^2 \ e^{-\frac{|X|^2}{2}}dv}, 
\end{equation}
for any function $\varphi$ satisfies $\int_{M^n}\varphi u_j \ e^{-\frac{|X|^2}{2}}dv$, $0\leq j\leq k$.
Since $X: M^n\to \mathbf {R}^{n+p}$ is a self-shrinker in $\mathbf {R}^{n+p}$, we have
\begin{equation}
H=-X^N.
\end{equation}
Letting $x_A$, $A=1, 2, \cdots, n+p $,  denote components of the position vector $X$, 
we define, for $0\leq i\leq k$,  
\begin{equation}
\varphi_i^A:=x_A u_i-\sum_{j=0}^k a_{ij}^Au_j, \quad a_{ij}^A=\int_{M^n}x_Au_iu_j \ e^{-\frac{|X|^2}{2}}dv.
\end{equation}
By a simple calculation, we obtain
\begin{equation}
\displaystyle{\int_{M^n} u_j\varphi_i^Ae^{-\frac{|X|^2}{2}}dv}=0,\hspace{5mm}i,j=0, 1,\cdots, k.
\end{equation}
From the Rayleigh-Ritz inequality, we have
\begin{equation}
\lambda_{k+1} \leq \frac{\displaystyle{-\int_{M^n}}\varphi_i^A\mathfrak L \varphi_i^A  \ e^{-\frac{|X|^2}{2}}dv}
{\displaystyle{\int_{M^n}}(\varphi_i^A)^2 \ e^{-\frac{|X|^2}{2}}dv}.
\end{equation}
Since
\begin{equation}
\begin{aligned}
&\mathfrak L \varphi_i^A=\Delta  \varphi_i^A-\langle X, \nabla  \varphi_i^A\rangle\\
&=\Delta (x_A u_i-\sum_{j=0}^k a_{ij}^Au_j)-\langle X, \nabla  (x_A u_i-\sum_{j=0}^k a_{ij}^Au_j)\rangle\\
&=x_A\Delta u_i + u_i\Delta x_A +2\nabla x_A\cdot \nabla u_i -\langle X, x_A\nabla  u_i+u_i\nabla x_A\rangle\\
&-\sum_{j=0}^k a_{ij}^A\Delta u_j +\langle X, \sum_{j=0}^k a_{ij}^A\nabla u_j)\\
&=-\lambda_ix_Au_i+u_i\mathfrak Lx_A+2\nabla x_A\cdot \nabla u_i+\sum_{j=0}^k a_{ij}^A\lambda_ju_j,
\end{aligned}
\end{equation}
we have, from (3.7) and (3.8),
\begin{equation}
\begin{split}
&(\lambda_{k+1}-\lambda_{i})||\varphi_i^A||^2 \\
&\leq -\int_{M^n}\varphi_i^A(u_i\mathfrak Lx_A+2\nabla x_A\cdot \nabla u_i) \ e^{-\frac{|X|^2}{2}}dv:=W_i^A,
\end{split}
\end{equation}
where 
$$
||\varphi_i^A||^2 =\int_{M^n}(\varphi_i^A)^2 \ e^{-\frac{|X|^2}{2}}dv.
$$
$$
$$
On the other hand, defining
\begin{equation*}
b_{ij}^A=-\int_{M^n}(u_j\mathfrak Lx_A+2\nabla x_A\cdot \nabla u_j)u_i \ e^{-\frac{|X|^2}{2}}dv
\end{equation*} 
we obtain
\begin{equation}
b_{ij}^A=(\lambda_i-\lambda_j)a_{ij}^A.
\end{equation} 
In fact,
\begin{equation*}
\begin{aligned}
\lambda_ia_{ij}^A&=\int_{M^n}\lambda_iu_iu_jx_A e^{-\frac{|X|^2}{2}}dv\\
&=-\int_{M^n}u_jx_A\mathfrak Lu_i \ e^{-\frac{|X|^2}{2}}dv\\
&=-\int_{M^n}u_i\mathfrak L(u_jx_A) \ e^{-\frac{|X|^2}{2}}dv\\
&=-\int_{M^n}u_i(x_A\mathfrak Lu_j+u_j\mathfrak L x_A+2\nabla x_A\cdot \nabla u_j) \ e^{-\frac{|X|^2}{2}}dv\\
&=\lambda_ja_{ij}^A+b_{ij}^A,
\end{aligned}
\end{equation*}
that is, 
\begin{equation*}
b_{ij}^A=(\lambda_i-\lambda_j)a_{ij}^A.
\end{equation*} 
Hence, we have 
\begin{equation}
b_{ij}^A=-b_{ji}^A.
\end{equation}
From (3.6), (3.9)  and the Cauchy-Schwarz  inequality, we infer 
\begin{equation}
\begin{aligned}
W_i^A&=-\int_{M^n}\varphi_i^A\bigl(u_i\mathfrak Lx_A+2\nabla x_A\cdot \nabla u_i\bigl) \ e^{-\frac{|X|^2}{2}}dv\\
&=-\int_{M^n}\varphi_i^A\bigl(u_i\mathfrak Lx_A+2\nabla x_A\cdot \nabla u_i
-\sum_{j=0}^kb_{ij}^Au_j\bigl) \ e^{-\frac{|X|^2}{2}}dv\\
&\leq \|\varphi_i^A\|\|u_i\mathfrak Lx_A+2\nabla x_A\cdot \nabla u_i
-\sum_{j=0}^kb_{ij}^Au_j\|.
\end{aligned}
\end{equation}
Hence, we have, from (3.9) and (3.12),
\begin{equation*}
\begin{split}
&(\lambda_{k+1}-\lambda_{i})(W_i^A)^2\\
&=(\lambda_{k+1}-\lambda_{i})\|\varphi_i^A\|^2\|u_i\mathfrak Lx_A+2\nabla x_A\cdot \nabla u_i
-\sum_{j=0}^kb_{ij}^Au_j\|^2\\
&\leq W_i^A\|u_i\mathfrak Lx_A+2\nabla x_A\cdot \nabla u_i
-\sum_{j=0}^kb_{ij}^Au_j\|^2.\\
\end{split}
\end{equation*} 
Therefore, we obtain
\begin{equation}
(\lambda_{k+1}-\lambda_{i})^2W_i^A
\leq (\lambda_{k+1}-\lambda_{i})\|u_i\mathfrak Lx_A+2\nabla x_A\cdot \nabla u_i
-\sum_{j=0}^kb_{ij}^Au_j\|^2.
\end{equation}
Summing  on $i$ from $0$ to $k$ for (3.13),  we have
\begin{equation}
\sum_{i=0}^k(\lambda_{k+1}-\lambda_{i})^2W_i^A
\leq \sum_{i=0}^k(\lambda_{k+1}-\lambda_{i})\|u_i\mathfrak Lx_A+2\nabla x_A\cdot \nabla u_i
-\sum_{j=0}^kb_{ij}^Au_j\|^2.
\end{equation}
By the definition of $b_{ij}^A$ and (3.10), we have 
\begin{equation}
\begin{aligned}
&\|u_i\mathfrak Lx_A+2\nabla x_A\cdot \nabla u_i
-\sum_{j=0}^kb_{ij}^Au_j\|^2\\
&=\|u_i\mathfrak Lx_A+2\nabla x_A\cdot \nabla u_i\|^2\\
&-2\sum_{j=0}^kb_{ij}^A\int_{M^n}(u_i\mathfrak Lx_A+2\nabla x_A\cdot \nabla u_i)u_j\ e^{-\frac{|X|^2}{2}}dv
+\sum_{j=0}^k(b_{ij}^A)^2\\
&=\|u_i\mathfrak Lx_A+2\nabla x_A\cdot \nabla u_i\|^2-\sum_{j=0}^k(b_{ij}^A)^2\\
&=\|u_i\mathfrak Lx_A+2\nabla x_A\cdot \nabla u_i\|^2-\sum_{j=0}^k (\lambda_i-\lambda_j)^2(a_{ij}^A)^2.\\
\end{aligned}
\end{equation}
Furthermore, according to  the definitions of $W_i^A$ and $\varphi_i^A$, we have from (3.10)
\begin{equation}
\begin{aligned}
W_i^A&=-\int_{M^n}\varphi_i^A\bigl(u_i\mathfrak Lx_A+2\nabla x_A\cdot \nabla u_i\bigl) \ e^{-\frac{|X|^2}{2}}dv\\
&=-\int_{M^n}(x_A u_i-\sum_{j=0}^k a_{ij}^Au_j)\bigl(u_i\mathfrak Lx_A
+2\nabla x_A\cdot \nabla u_i\bigl) \ e^{-\frac{|X|^2}{2}}dv\\
&=-\int_{M^n}(x_A u_i^2\mathfrak Lx_A
+2x_Au_i\nabla x_A\cdot \nabla u_i\bigl) \ e^{-\frac{|X|^2}{2}}dv\\
&+\sum_{j=0}^k a_{ij}^A\int_{M^n}u_j( u_i\mathfrak Lx_A
+2\nabla x_A\cdot \nabla u_i\bigl) \ e^{-\frac{|X|^2}{2}}dv\\
\end{aligned}
\end{equation}
\begin{equation*}
\begin{aligned}
&=-\int_{M^n}\bigl(x_A\mathfrak Lx_A-\frac12\mathfrak L(x_A)^2\bigl)u_i^2 \ e^{-\frac{|X|^2}{2}}dv
+\sum_{j=0}^k a_{ij}^Ab_{ij}^A\\
&=\int_{M^n}\nabla x_A\cdot \nabla x_A  u_i^2\ e^{-\frac{|X|^2}{2}}dv
+\sum_{j=0}^k (\lambda_i-\lambda_j)(a_{ij}^A)^2.\\
\end{aligned}
\end{equation*}
Since 
\begin{equation}
\begin{aligned}
&2\sum_{i,j=0}^k (\lambda_{k+1}-\lambda_{i})^2 (\lambda_i-\lambda_j)(a_{ij}^A)^2\\
&=\sum_{i,j=0}^k (\lambda_{k+1}-\lambda_{i})^2 (\lambda_i-\lambda_j)(a_{ij}^A)^2-\sum_{i,j=0}^k (\lambda_{k+1}-\lambda_{j})^2 (\lambda_i-\lambda_j)(a_{ij}^A)^2\\
\end{aligned}
\end{equation}
\begin{equation*}
\begin{aligned}
&=-\sum_{i,j=0}^k (\lambda_{k+1}-\lambda_{i}+\lambda_{k+1}-\lambda_j) (\lambda_i-\lambda_j)^2(a_{ij}^A)^2\\
&=-2\sum_{i,j=0}^k (\lambda_{k+1}-\lambda_{i}) (\lambda_i-\lambda_j)^2(a_{ij}^A)^2,
\end{aligned}
\end{equation*}
from (3.14), (3.15), (3.16) and (3.17), we obtain, for any $A$, $A=1, 2, \cdots, n+p$,
\begin{equation}
\begin{aligned}
&\sum_{i=0}^k (\lambda_{k+1}-\lambda_{i})^2\int_{M^n}\nabla x_A\cdot \nabla x_A  u_i^2\ e^{-\frac{|X|^2}{2}}dv\\
&\leq \sum_{i=0}^k (\lambda_{k+1}-\lambda_{i})\|u_i\mathfrak Lx_A+2\nabla x_A\cdot \nabla u_i\|^2.
\end{aligned}
\end{equation}
On the other hand, since 
\begin{equation*}
\mathfrak L x_A=-x_A, \quad \sum_{A=1}^{n+p}(\nabla x_A\cdot \nabla u_i)^2=\nabla u_i\cdot\nabla u_i,
\end{equation*}
we infer, from  (2.6), 
\begin{equation}
\begin{aligned}
&\sum_{A=1}^{n+p} \|u_i\mathfrak Lx_A+2\nabla x_A\cdot \nabla u_i\|^2\\
&=\sum_{A=1}^{n+p}\int_{M^n}\bigl(u_i\mathfrak Lx_A+2\nabla x_A\cdot \nabla u_i\bigl)^2\ e^{-\frac{|X|^2}{2}}dv\\
&=\sum_{A=1}^{n+p}\int_{M^n}\bigl(u_i^2(x_A)^2-4u_ix_A\nabla x_A\cdot \nabla u_i
+4(\nabla x_A\cdot \nabla u_i)^2\bigl)\ e^{-\frac{|X|^2}{2}}dv\\
&=\sum_{A=1}^{n+p}\int_{M^n}\bigl(u_i^2(x_A)^2-\nabla (x_A)^2\cdot \nabla u_i^2\bigl)\ e^{-\frac{|X|^2}{2}}dv
+4\int_{M^n}\nabla u_i\cdot\nabla u_i\ e^{-\frac{|X|^2}{2}}dv\\
\end{aligned}
\end{equation}
\begin{equation*}
\begin{aligned}
&=\int_{M^n}(\mathfrak L|X|^2+|X|^2) u_i^2\ e^{-\frac{|X|^2}{2}}dv+4\lambda_i\\
&=\int_{M^n}(2n-|X|^2) u_i^2\ e^{-\frac{|X|^2}{2}}dv+4\lambda_i\\
&\leq(2n-\min_{M^n}{|X|^2})+4\lambda_i.\\
\end{aligned}
\end{equation*}
Furthermore, because of 
\begin{equation}
\sum_{A=1}^{n+p}\nabla x_A\cdot \nabla x_A =n,
\end{equation}
taking summation on $A$ from $1$ to $n+p$ for (3.18) and using  (3.19) and (3.20), we get 
\begin{equation*}
\begin{aligned}
&\sum_{i=0}^k (\lambda_{k+1}-\lambda_{i})^2\leq \frac 4n\sum_{i=0}^k (\lambda_{k+1}-\lambda_{i})
(\lambda_i+\frac {2n-\min_{M^n}{|X|^2}}4).
\end{aligned}
\end{equation*}
It finished the proof of the theorem 1.1.
\begin{flushright}
$\square$
\end{flushright}

\section{Upper bounds for eigenvalues}

\noindent
The following recursion formula of Cheng and Yang \cite{CY} plays a very important role in order to
prove the theorem 1.2.

\vskip 2mm
\noindent
{\bf A recursion formula of Cheng and Yang}.  Let  $\mu_1 \leq  \mu_2 \leq  \dots,
\leq \mu_{k+1}$ be any positive  real numbers satisfying
\begin{equation*}
  \sum_{i=1}^k(\mu_{k+1}-\mu_i)^2 \le
 \frac 4n\sum_{i=1}^k\mu_i(\mu_{k+1} -\mu_i).
\end{equation*}
 Define
 \begin{equation*}
 \Lambda_k=\frac 1k\sum_{i=1}^k\mu_i,\qquad T_k=\frac 1k
\sum_{i=1}^k\mu_i^2, \ \ \
F_k=\left (1+\frac 2n \right )\Lambda_k^2-T_k.
\end{equation*}
Then, we have
\begin{equation}
F_{k+1}\leq C(n,k) \left ( \frac {k+1}k \right )^{\frac 4n}F_k,
\end{equation}
where
$$
C(n,k) =1-\frac 1{3n}
  \left (\frac k{k+1}\right )^{\frac
  4n}\frac {\left(1+\frac 2n\right )\left (1+
  \frac 4n\right )}{(k+1)^3}<1.
$$

\vskip 2mm
\noindent
{\it Proof of Theorem {\rm 1.2}.}
From the proposition 2.1, we know 
$$
\mu_{i+1}=\lambda_{i}+\dfrac{2n-\min_{M^n} |X|^2}{4}>0,
$$
for any $i=0, 1, 2, \cdots$.  Then, we obtain from (1.5)
\begin{equation}
\sum_{i=1}^k (\mu_{k+1}-\mu_{i})^2 \leq \frac{4}{n}\sum_{i=1}^k
(\mu_{k+1}-\mu_{i})\mu_i.
\end{equation} 
Thus, we know that $\mu_i$'s satisfy the condition of the above recursion formula of Cheng and Yang \cite{CY}.
Furthermore,  since
\begin{equation*}
\mathfrak L x_A=-x_A  \ \text{\rm and} \   \int_{M^n}x_A\ e^{-\frac{|X|^2}{2}}dv=0,  \quad \text{\rm for} \  A=1, 2, \cdots, n+p,
\end{equation*}
$\lambda=1$ is an eigenvalue of $\mathfrak L$ with multiplicity at least $n+p$.
Thus, 
$$
\lambda_{1}\leq \lambda_{2}\leq \cdots\leq \lambda_{n+1}\leq 1.
$$
Hence,  we have 
\begin{equation}
\sum_{j=1}^{n}(\mu_{j+1}-\mu_1)=\sum_{j=1}^{n}\lambda_{j}\leq n\leq 2n-\min_{M^n} |X|^2=4\mu_1
\end{equation}
because of $\min_{M^n} |X|^2\leq n$ according to the proposition 2.1.
Hence, we can prove  the theorem 1.2  as in Cheng and Yang \cite{CY} almost word by word. 
For the convenience of readers, we shall give a self contained proof.
First of all, according to the  above recursion formula of Cheng and Yang, we have
\begin{equation*}
F_{k} \le  C(n,k-1) \left (\frac k{k-1} \right )^ {\frac
4n}F_{k-1} \le  k^{\frac 4n} F_1=\frac 2n k^{\frac 4n}\mu_1^2.
\end{equation*}
Furthermore, we infer,  from (4.2) 
$$
\left [\mu_{k+1} -\left (1+\frac 2n\right)
 \Lambda_k \right ]^2
\le \left (1+\frac 4n\right )F_k-\frac 2n \left (1+\frac 2n\right
)\Lambda_k^2.
$$
Hence, we have
$$
\frac {\frac 2n} {\left (1+\frac 4n\right)}\mu_{k+1}^2+ \frac
{1+\frac2n}{1+\frac 4n } \left (\mu_{k+1}- \left (1+\frac
4n\right )\Lambda_k \right )^2 \le \left (1+\frac 4n\right )F_k.
$$
Thus, we derive
\begin{equation}
\mu_{k+1} \le \left (1+\frac 4n\right ) \sqrt{\frac n2 F_k}
 \le \left (1+\frac 4n\right ) k^{\frac 2n}\mu_1.
\end{equation}
Define 
\begin{align*}
&a_1(n)=\frac{n(1+\frac4n)\left(1+\frac 8{n+1}+\frac 8{(n+1)^2}
\right )^{\frac 12}}{(n+1)^{\frac 2n}}-n,\\
&a_2(k,n)=\frac n{k^{\frac 2n}}\left (1+\frac
{4(n+k+4)}{n^2+5n-4(k-1)}\right)-n,\\
&a_2(k)=\max \{ a(n,k),k \le n\le 400\}, \\
 &a_3(k)=\dfrac{4}{1-\frac k{400}}-2\log k,\\
 &a(k) =\max \{a_1(k), a_2(k+1)), a_3(k+1) \}.
\end{align*}

\vskip2mm
\noindent
{\it The case 1}.  For   $k \ge n+1$, we have 
\begin{equation}
\aligned \mu_{k+1}\le &\frac {\left (1+\frac 4n\right)
\left(1+\frac 8{n+1}+\frac 8{(n+1)^2}
\right )^{\frac 12}}{(n+1)^{\frac 2n}}k^{\frac 2n}\mu_1\\
&=\left (1+\frac {a_1(n)}n\right)k^{\frac 2n}\mu_1,
\endaligned
\end{equation}
where $a_1(n) \le 2.31$. In fact, 
since 
$\mu_{k+1}$ satisfies (4.2),  we have, from (4.1),
\begin{equation}
\mu_{k+1}^2 \le  \frac n2 \left (1+\frac 4n\right)^2F_k \le
\frac n2 \left (1+\frac 4n\right)^2 \left(\frac k{n+1}\right
)^{\frac 4n}F_{n+1}.
\end{equation}
On the other hand,
\begin{equation}
\aligned F_{n+1}=&\frac 2n\Lambda_{n+1}^2-\sum_{i=1}^{n+1}\frac
{(\mu_i-\Lambda_{n+1})^2}{n+1}\\
\le&\frac 2n\Lambda_{n+1}^2-\frac {(\mu_1-\Lambda_{n+1})^2+
\frac 1n(\mu_1-\Lambda_{n+1})^2}{n+1}\\
=&\frac 2n\left (\Lambda_{n+1}^2-\frac
{(\mu_1-\Lambda_{n+1})^2}2\right).
\endaligned
\end{equation}
It is obvious that $\Lambda_{n+1}^2-\dfrac
{(\mu_1-\Lambda_{n+1})^2}2$ is an increasing function of
$\Lambda_{n+1}$.  From (4.3), we have
\begin{equation}
\mu_{n+1}+\dots +\mu_2 \le (n+4)\mu_1.
\end{equation}
Thus,  we derive
\begin{equation}
\Lambda_{n+1} \le (1+\frac4{n+1})\mu_1.
\end{equation}
Hence, we have
\begin{equation}
\frac n2F_{n+1}\leq \left(1+\frac 8{n+1}+\frac 8{(n+1)^2}
\right)\mu_1^2.
\end{equation}
From (4.6) and (4.10), we complete the proof of  (4.5).
\vskip2mm
\noindent
{\it The case 2}.   For $k \ge 55$ and $n\ge 54$, we have
\begin{equation}
\aligned \mu_{k+1}\le k^{\frac 2n}\mu_1.
\endaligned
\end{equation}

\noindent
If $ k\ge n+1$, from the case 1, we have
$$
\mu_{k+1} \le\frac 1{(n+1)^{\frac 2n}} \left (1+ \frac
4n\right )^2 k^{\frac 2n} \mu_1.
$$
Since
\begin{equation}
\begin{aligned} &(n+1)^{\frac 2n}=\exp \left (\frac 2n\log (n+1)\right)\\
&\ge 1+\frac 2n\log (n+1)+\frac 2{n^2}(\log (n+1))^2\\
&\ge \left (1+\frac 1n\log (n+1)\right )^2,
\end{aligned}
\end{equation}
we have
\begin{equation}
\mu_{k+1} \le \left (\frac{ 1+\frac 4n}{1+\frac 1n \log
(n+1)}\right )^2 k^{\frac 2n}\mu_1.
\end{equation}
Then, when $n\ge 54$, $\log (n+1) \geq  4 $,  we have
$$
\mu_{k+1} \le k^{\frac 2n}\mu_1.
$$
On the other hand, if $k\le n$, then $ \Lambda_k \le
\Lambda_{n+1}$.
Since
\begin{align*}
\frac n2F_k&
=\Lambda_k^2-\frac{n}2\frac{\sum_{i=1}^k(\mu_i-\Lambda_k)^2}k\\
  &\leq\Lambda_k^2-\frac{n}2\frac{(\mu_1-\Lambda_k)^2
  +\dfrac{\left\{\sum_{i=2}^k(\mu_i-\Lambda_k)\right\}^2}{k-1}}k\\
&\leq\Lambda_{k}^2-\frac{(\mu_1-\Lambda_{k})^2}2\\
&\leq
\Lambda_{n+1}^2-\frac{(\mu_1-\Lambda_{n+1})^2}2\leq
(1+\frac4n)^2\mu_1^2,
\end{align*}
 we have
$$
\aligned \mu_{k+1} \le &\left (1+\frac 4n \right ) \sqrt{\frac
n2 F_k} \le \frac 1{k^{\frac 2n}}\left ( 1+\frac 4n\right )^2
      k^{\frac 2n}\mu_1      \le \left (\frac {1+\frac 4 n}{1+\frac {\log k}n}
      \right )^2
      k^{\frac 2n} \mu_1.
      \endaligned
      $$
 Here we used $ k^{\frac 2n} \geq(1+\frac {\log k}n)^2$.
By the same assertion as above, when $k\geq 55$, we also have
$$
\mu_{k+1} \leq k^{\frac 2n}\mu_1.
$$
\vskip2mm
\noindent
 {\it The case 3}.  For $k\leq 54$ and $k\leq n$, we have 
 \begin{equation*}
\begin{aligned}
 \mu_{k+1}\leq (1+\frac{\max\{a_2(k),a_3(k)\}}n)k^{\frac2n}\mu_1.
\end{aligned}
\end{equation*}
Because of $k\le n$ and $k \le 54$,
from (4.3), we derive,
\begin{equation}
\mu_{k+1} \le \frac{1}{n-k+1}\{(n+5)\mu_1 -k\Lambda_k\}.
\end{equation}
Since the formula (4.2) is a quadratic inequality for $\mu_{k+1}$, we have
 \begin{equation}
  \mu_{k+1} \le \left (1+\frac 4n \right )\Lambda_k.
\end{equation}
Since the right hand side of (4.14) is a decreasing function of
$\Lambda_k$ and the right hand side of (4.15) is an increasing
function of $\Lambda_k$, for $\dfrac{1}{n-k+1}\{(n+5)\mu_1
-k\Lambda_k\} =\left (1+\frac 4n \right )\Lambda_k$,  we infer
\begin{equation}
\begin{aligned} \mu_{k+1}\le&\frac 1{k^{\frac 2n}}\left (1+\frac
{4(n+k+4)}{n^2+5n-4(k-1)}\right)
k^{\frac 2n}\mu_1\\
&=\left (1+\frac { a_2(k,n)}n\right )k^{\frac 2n}\mu_1.
\end{aligned}
\end{equation}
From the definition of  $a_2(k)=\max \{ a(n,k),k \le n\le 400\}$, when $n\leq 400$, we obtain
\begin{equation}
\mu_{k+1} \le \left (1+\frac { a_2(k)}n\right )k^{\frac2n}\mu_1.
\end{equation}
When $n > 400$ holds, from (4.4), we have
$$
\mu_{k+1}\le \left ( 1+\frac 4{n-k}\right )\mu_1.
$$
Since $n>400$ and $k\leq 54$, we know $\frac2n\log k<\frac1{50}$.
Hence, we have
\begin{align*}
k^{-\frac2n}=e^{-\frac2n\log k}
&=1-\frac2n\log k+\frac12(\frac2n\log k)^2-\cdots\\
&\leq 1-\frac2n\log k+\frac12(\frac2n\log k)^2.
\end{align*}
Therefore, we obtain
\begin{align*}
&(1+\frac{4}{n-k})k^{-\frac2n}\\
&\leq(1+\frac{4}{n-k})\left(1-\frac2n\log k+\frac12(\frac2n\log k)^2\right)\\
&\leq 1+\frac {\left (4/(1-\frac k{400})-2\log k \right)}n.
\end{align*}
Hence, we infer
\begin{equation}
\begin{aligned}
 \mu_{k+1}\le& \left ( 1+\frac 4{n-k}\right )k^{-\frac2n}k^{\frac2n}\mu_1\\
\le& \left (1+\frac {\left (4/(1-\frac k{400})-2\log k \right)}
n\right )k^{\frac 2n}\mu_1\\
=&\left (1+ \frac {a_3(k)}n\right )k^{\frac 2n}\mu_1.
\end{aligned}
\end{equation}
By Table 1 of the values of $a_1(k)$, $a_2(k+1)$ and $a_3(k+1)$
which are calculated by using Mathematica and are listed up in the
next page,  we have
 $ a_1(1) \le a_2(2) \leq a_3(2) =a(1)\le 2.64$ and, for $ k\geq 2$,
$$
a_3(k+1) \le a_2 (k+1) \le a_1(k).
$$
Hence, $a(k)=a_1(k)$ for $k\geq 2$.
Further, for $k\geq 41$, we know $a(k)<0$.
Hence, for $k\geq 2$, we derive
$$
\mu_{k+1}\leq (1+\frac{a(\min\{n,k-1\})}n)k^{\frac2n}\mu_1
$$
and for $n\geq 41$ and $k\geq 41$,  we have
$$
\mu_{k+1}\leq k^{\frac2n}\mu_1.
$$
When $k=1$, $a(0)=4$ from (4.4). It is easy to check that, when $k\ge 3$,
 by a simple calculation,
$$
a(k) \le 2.2-4\log (1+\frac{k-3}{50}).
$$
This completes the proof of the theorem 1.2.
\begin{flushright}
$\square$
\end{flushright} 

\vskip 3mm
{\begin{center}{Table 1: The values of $a_1(k)$, $a_2(k+1)$ and $a_3(k+1)$}
\end{center}}
\begin{table}[htbp]
\begin{center}
\begin{tabular}{|c|rrrrrrrrrrrr} \hline
$k$                &1&2&3&4&5&6&7&8&9&10&\\ \hline
$a_1(k)\leq $  &2.31&2.27&2.2&2.12&2.03&1.94&1.86&1.77&1.69&1.61\\ \hline
$a_2(k+1)\leq $   &2.62&2.05&2.00&1.96&1.90&1.84&1.77&1.70&1.63&1.56\\ \hline
$a_3(k+1)\leq $  &2.64&1.84&1.27&0.84&0.48&0.18&-0.07&-0.30&-0.50&-0.68\\
 \hline
 \\
$k$                &11&12&13&14&15&16&17&18&19&20&\\ \hline
$a_1(k)\leq$  &1.53&1.46&1.39&1.32&1.25&1.18&1.12&1.06&1.00&0.94\\ \hline
$a_2(k+1)\leq $   &1.49&1.42&1.35&1.29&1.22&1.16&1.10&1.04&0.98&0.92\\ \hline
$a_3(k+1) \leq $  &-0.84&-0.99&-1.13&-1.26&-1.37&-1.48&-1.59&-1.68&-1.78&-1.86\\ \hline
 \\
$k$                &21&22&23&24&25&26&27&28&29&30&\\ \hline
$a_1(k)\leq $  &0.89&0.83&0.78&0.72&0.67&0.62&0.58&0.53&0.48&0.44\\ \hline
$a_2(k+1)\leq $   &0.87&0.82&0.76&0.71&0.66&0.61&0.57&0.52&0.47&0.43&\\ \hline
$a_3(k+1)\leq $  &-1.94&-2.02&-2.10&-2.17&-2.23&-2.30&-2.36&-2.42&-2.47&-2.53\\ \hline
 \\
 $k$                &31&32&33&34&35&36&37&38&39&40&41&\\ \hline
$a_1(k)\leq $  &0.39&0.35&0.31&0.27&0.23&0.19&0.15&0.11&0.07&0.03&-0.00\\ \hline
$a_2(k+1)\leq $  &0.38&0.34&0.30&0.26&0.22&0.18&0.14&0.10&0.07&0.03&-0.01\\ \hline
$a_3(k+1)\leq $  &-2.58&-2.63&-2.68&-2.72&-2.77&-2.81&-2.85&-2.89&-2.93&-2.97&-3.00\\ \hline
\end{tabular}
\end{center}
\end{table}

\vskip 2mm
\noindent

\section{The Dirichlet eigenvalue problem}

\noindent
For a bounded domain $\Omega$  with a piecewise smooth boundary $\partial \Omega$ in  an $n$-dimensional  complete
self-shrinker  in $\mathbf{R}^{n+p}$, we consider 
the following Dirichlet eigenvalue problem of  the differential operator $\mathfrak L$:
\begin{equation}
\begin{cases}
\mathfrak Lu=-\lambda u  &  \text{in $\Omega$}, \\
u=0 & \text{on $\partial \Omega$}.
\end{cases}
\end{equation} 
This eigenvalue problem has a real and  discrete spectrum: 
\begin{equation*}
0<\lambda _{1} < \lambda _{2} \leq \cdots \leq \lambda _{k} \leq \cdots 
 \longrightarrow \infty,
\end{equation*} 
where each eigenvalue is repeated according to  its multiplicity.  We have  following 
estimates for eigenvalues of the  Dirichlet eigenvalue problem $(5.1)$.

\begin{theorem} Let $\Omega$ be a bounded domain with a piecewise smooth boundary $\partial \Omega$ 
in an $n$-dimensional complete 
self-shrinker  $M^n$   in $\mathbf{R}^{n+p}$. Then, 
eigenvalues of the  Dirichlet eigenvalue problem $(5.1)$ satisfy
\begin{equation*}
\sum_{i=1}^k (\lambda_{k+1}-\lambda_{i})^2 \leq \frac{4}{n}\sum_{i=1}^k
(\lambda_{k+1}-\lambda_{i})(\lambda_{i}+\frac{2n-\inf_{\Omega}{|X|^2}}{4}).
\end{equation*} 
\end{theorem}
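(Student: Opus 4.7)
The plan is to adapt the proof of Theorem 1.1 to the Dirichlet setting. The key observation is that all pointwise identities used in Section 3---namely $\mathfrak{L}x_A = -x_A$, $\mathfrak{L}|X|^2 = 2(n-|X|^2)$, and $\sum_{A=1}^{n+p}\nabla x_A \cdot \nabla x_A = n$---are local consequences of the self-shrinker equation $H=-X^N$ and therefore continue to hold on $\Omega$, regardless of whether $M^n$ is compact.

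Let $\{u_i\}_{i\geq 1}$ be an $L^2(e^{-|X|^2/2}dv)$-orthonormal basis of Dirichlet eigenfunctions, so that $\mathfrak{L}u_i=-\lambda_i u_i$ in $\Omega$ and $u_i|_{\partial\Omega}=0$. I would use the same trial functions as in Section 3,
$$
\varphi_i^A := x_A u_i - \sum_{j=1}^k a_{ij}^A u_j, \qquad a_{ij}^A = \int_\Omega x_A u_i u_j\, e^{-|X|^2/2}\,dv,
$$
for $A=1,\dots,n+p$ and $i=1,\dots,k$. Because each $u_j$ vanishes on $\partial\Omega$, so does $\varphi_i^A$, and by construction $\varphi_i^A$ is $L^2(e^{-|X|^2/2}dv)$-orthogonal to $u_1,\dots,u_k$. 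Hence $\varphi_i^A$ is admissible for the Rayleigh--Ritz characterization of $\lambda_{k+1}$.

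From this point on, the computation runs line for line as in Section 3, with the summation index $i$ ranging from $1$ to $k$ (there is no analogue of the zero eigenvalue in the Dirichlet setting). The identity $b_{ij}^A=(\lambda_i-\lambda_j)a_{ij}^A$, the Cauchy--Schwarz step leading to (3.13), the antisymmetry trick (3.17), and the substitution $\mathfrak{L}|X|^2+|X|^2=2n-|X|^2$ all go through unchanged. The only substantive modification appears in the analogue of (3.19), where the pointwise bound $|X|^2 \geq \inf_\Omega |X|^2$ gives $\int_\Omega(2n-|X|^2)u_i^2\, e^{-|X|^2/2}dv\leq 2n-\inf_\Omega|X|^2$, producing the constant $\tfrac14(2n-\inf_\Omega|X|^2)$ stated in the theorem.

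The only matter to verify is that the self-adjointness formula $\int_\Omega f\,\mathfrak{L}u\, e^{-|X|^2/2}dv=\int_\Omega u\,\mathfrak{L}f\, e^{-|X|^2/2}dv$, used several times in the closed case, remains valid here. Integration by parts against the weight $e^{-|X|^2/2}$ produces a boundary contribution proportional to $\int_{\partial\Omega}(f\partial_\nu u-u\partial_\nu f)e^{-|X|^2/2}d\sigma$; in every invocation at least one of the two functions is an eigenfunction $u_j$ or a product of the form $u_j x_A$, each of which vanishes on $\partial\Omega$, so the boundary term drops. Thus no genuine new obstacle appears---the Dirichlet setting is handled purely by bookkeeping around the boundary conditions and the shifted summation range, so the inequality follows exactly as in the closed case.
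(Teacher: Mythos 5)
Your proposal is correct and is exactly the argument the paper intends: the paper's proof of Theorem 5.1 simply says to repeat the proof of Theorem 1.1 with the eigenvalue count starting from $1$, and your write-up supplies the (routine but worth stating) verifications that the trial functions $\varphi_i^A$ vanish on $\partial\Omega$ and that all boundary terms in the integrations by parts drop because at least one factor is an eigenfunction or a product $u_j x_A$ vanishing on $\partial\Omega$. No gap.
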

\noindent
\begin{proof}. By making use of the same proof as in the proof of the theorem 1.1,
we can prove the theorem 5.1  if one notices to count the number of eigenvalues from $1$.
\end{proof}

\noindent
From the recursion formula of Cheng and Yang \cite{CY}, we can give an upper bound for eigenvalue $\lambda_{k+1}$:
\begin{theorem} Let $\Omega$ be a bounded domain with a piecewise smooth boundary $\partial \Omega$
in an $n$-dimensional complete 
self-shrinker  $M^n$   in $\mathbf{R}^{n+p}$. Then, 
eigenvalues of the  Dirichlet eigenvalue problem $(5.1)$ satisfy, for any $k\geq 1$, 
\begin{equation*}
\lambda_{k+1}+\frac{2n-\inf_{\Omega}{|X|^2}}{4}
\leq (1+\frac {a(min\{n,k-1\})}{n})(\lambda_1+\frac{2n-\inf_{\Omega}{|X|^2}}{4})k^{2/n},
\end{equation*}
where the bound of $a(m)$ can be formulated as:
$$
\left \{ \aligned
a(0)&\leq 4, \\
 a(1)&\leq 2.64,\\
 a(m)&\leq 2.2-4\log(1+\frac1{50}(m-3)),\qquad \mbox{for}\quad
 m\geq 2.
\endaligned \right .
$$
In particular, for $n\ge 41$ and $k\geq 41$, we have
 \begin{equation*}
\lambda_{k+1}+\frac{2n-\inf_{\Omega}{|X|^2}}{4}
\leq (\lambda_1+\frac{2n-\inf_{\Omega}{|X|^2}}{4})k^{2/n}.
\end{equation*}
\end{theorem}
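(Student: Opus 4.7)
The plan is to mirror the proof of Theorem 1.2 almost verbatim, with Theorem 5.1 playing the role of the universal inequality.

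First I would set $\mu_i := \lambda_i + \tau$ for $i \geq 1$, where $\tau := (2n - \inf_\Omega|X|^2)/4$. Since $\lambda_i > 0$ in the Dirichlet setting, these $\mu_i$ are positive, and by Theorem 5.1 they satisfy
$$\sum_{i=1}^k (\mu_{k+1}-\mu_i)^2 \leq \frac{4}{n}\sum_{i=1}^k (\mu_{k+1}-\mu_i)\mu_i,$$
precisely the hypothesis of the Cheng--Yang recursion formula quoted at the start of Section 4. Iterating the recursion gives $F_k \leq k^{4/n} F_1 = \frac{2}{n} k^{4/n}\mu_1^2$, and the same algebra leading to (4.4) in the closed proof then yields
$$\mu_{k+1} \leq \Bigl(1+\frac{4}{n}\Bigr) k^{2/n}\mu_1,$$
which already gives the theorem for $k = 1$, i.e.\ $a(0) \leq 4$.

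For the sharper constants $a(m)$, $m \geq 1$, I would execute Cases 1--3 from the proof of Theorem 1.2 step by step. That argument is purely algebraic once one has both (a) the Cheng--Yang recursion and (b) the initial inequality (4.3), $\sum_{j=1}^n(\mu_{j+1}-\mu_1) \leq 4\mu_1$. In the closed case (4.3) came from $\mathfrak L x_A = -x_A$ together with $\int_{M^n}x_A\, e^{-|X|^2/2}\,dv = 0$, which forces $\lambda_{n+1} \leq 1$. In the Dirichlet setting the coordinate functions do not vanish on $\partial\Omega$, so this specific route is not available; a substitute is required. The natural candidate is to derive the analog of (4.3) directly from Theorem 5.1 at $k = n$: expanding and applying Cauchy--Schwarz produces $\sum_{i=1}^n(\mu_{n+1}-\mu_i) \leq \frac{4n}{n+4}\mu_{n+1}$, and feeding in the crude bound on $\mu_{n+1}$ from the previous paragraph recovers an inequality of the form of (4.3) with a constant that can be absorbed into the $a(m)$ coefficients. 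Once this substitute is in place, Cases 1--3 go through verbatim, with the only bookkeeping change being that $\mu_{k+1}$ now corresponds to the Dirichlet eigenvalue $\lambda_{k+1}$ rather than $\lambda_k$.

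The main obstacle is precisely this Dirichlet replacement for (4.3). It is the single step where the closed-case coordinate-function argument genuinely breaks down, and producing a sufficiently sharp Dirichlet analog is what allows us to recover the identical numerical values of $a(m)$ and hence the optimal bound $\lambda_{k+1}+\tau \leq (\lambda_1+\tau)k^{2/n}$ in the regime $n,k \geq 41$.
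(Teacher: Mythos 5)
Your overall architecture is right --- shift by $\tau=(2n-\inf_\Omega|X|^2)/4$, invoke Theorem 5.1 to enter the Cheng--Yang recursion, and rerun Cases 1--3 --- and you have correctly isolated the one non-automatic step: the Dirichlet analogue of (4.3). But your proposed substitute does not deliver it. The inequality you extract from Theorem 5.1 at $k=n$, namely $\sum_{i=1}^n(\mu_{n+1}-\mu_i)\le \frac{4n}{n+4}\mu_{n+1}$, is correct, but it is equivalent to $\mu_{n+1}\le\frac{n+4}{n}\Lambda_n$, i.e.\ an upper bound on $\mu_{n+1}$ in terms of $\Lambda_n$ (essentially (4.15) at $k=n$), and carries no new information beyond the recursion. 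What (4.3) actually supplies is an \emph{upper} bound $\sum_{j=2}^{n+1}\mu_j\le(n+4)\mu_1$, hence $\Lambda_{n+1}\le(1+\frac{4}{n+1})\mu_1$, which feeds (4.9)--(4.10) in Case 1 and (4.14) in Case 3. Your quantity and the needed one are related by $\sum_{j=1}^n(\mu_{j+1}-\mu_1)=(n+1)(\mu_{n+1}-\mu_1)-\sum_{i=1}^n(\mu_{n+1}-\mu_i)$, so converting your bound into the needed one requires an upper bound on $\mu_{n+1}$, and the only one available at that stage is the crude $\mu_{n+1}\le(1+\frac4n)n^{2/n}\mu_1$; this yields only $\sum_{j=1}^n(\mu_{j+1}-\mu_1)\le n\bigl((1+\tfrac4n)n^{2/n}-1\bigr)\mu_1\approx(4+2\log n)\mu_1$, whose constant grows with $n$ and cannot be ``absorbed'' into $a(m)$ without changing the stated numerical values --- in particular it destroys the negativity of $a(m)$ for $m\ge41$ that gives the clean bound for $n,k\ge41$. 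The universal inequality alone genuinely cannot force $\sum_{j=2}^{n+1}\mu_j\le(n+4)\mu_1$.

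The paper closes this gap with Proposition 5.1, which is a separate trial-function argument, not a consequence of Theorem 5.1: one rotates the coordinates by a Gram--Schmidt orthogonal matrix $Q$ so that $y_A=\sum_C q_{AC}x_C$ satisfies $\int_\Omega y_Au_1u_{B+1}\,e^{-|X|^2/2}dv=0$ for $B<A$, takes $\varphi_A=(y_A-a_A)u_1$ as a trial function for $\lambda_{A+1}$, and uses $\sum_A\nabla y_A\cdot\nabla y_A=n$ together with $\nabla y_A\cdot\nabla y_A\le1$ to obtain $\sum_{j=1}^n(\lambda_{j+1}-\lambda_1)\le(2n-\inf_\Omega|X|^2)+4\lambda_1=4\mu_1$, exactly the analogue of (4.3). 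With that proposition in hand, the rest of your plan goes through as you describe; without it, your argument only proves the weak form $a(m)\le4$.
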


\begin{remark}
For the Euclidean space $\mathbf{R}^{n}$, the differential operator $\mathfrak L$ is 
called Ornstein-Uhlenbeck operator in stochastic analysis. Since the Euclidean
space $\mathbf{R}^{n}$ is a complete self-shrinker in $\mathbf{R}^{n+1}$, our 
theorems also give estimates for eigenvalues of the Dirichlet
eigenvalue problem of the Ornstein-Uhlenbeck operator.
\end{remark}

\noindent
For the  Dirichlet eigenvalue problem $(5.1)$, components $x_A$'s of the position vector $X$  are not eigenfunctions
corresponding to the eigenvalue $1$ because they do not satisfy the boundary condition.
In order to prove the theorem 5.2, we need to obtain the following  estimates for lower order eigenvalues. 

\begin{proposition} Let $\Omega$ be a bounded domain with a piecewise smooth boundary $\partial \Omega$
 in an $n$-dimensional complete 
self-shrinker  $M^n$   in $\mathbf{R}^{n+p}$. Then, 
eigenvalues of the  Dirichlet eigenvalue problem $(5.1)$ satisfy
\begin{equation*}
\sum_{j=1}^n (\lambda_{j+1}-\lambda_{1}) \leq (2n-\inf_{\Omega}{|X|^2})+4\lambda_1.
\end{equation*} 
\end{proposition}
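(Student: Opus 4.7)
I plan to adapt the trial functions of the proof of Theorem 1.1 to the Dirichlet setting at the base of the spectrum. For each $A = 1, \ldots, n+p$, set
\[
\varphi_A := x_A u_1 - c_{1A}\,u_1, \qquad c_{1A} := \int_{\Omega} x_A u_1^2\,e^{-|X|^2/2}\,dv.
\]
Each $\varphi_A$ vanishes on $\partial\Omega$ (since $u_1$ does) and is weighted-$L^2$ orthogonal to $u_1$, hence each is admissible in the variational characterization of eigenvalues above $\lambda_1$. These are precisely the trial functions $\varphi_i^A$ of formula (3.5) specialized to $i=1$ and truncation level $k=1$.

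Specializing the computations of Section 3 to $i=1$ gives two key identities. First, the computation leading to (3.16) has the sum $\sum_{j=0}^{k}(\lambda_i-\lambda_j)(a_{ij}^A)^2$ collapsing (the single term $(\lambda_1-\lambda_1)(c_{1A})^2$ vanishes), so
\[
W_1^A := -\!\int_\Omega\!\varphi_A\mathfrak{L}\varphi_A\,e^{-|X|^2/2}dv - \lambda_1\|\varphi_A\|^2 = \int_\Omega |\nabla x_A|^2 u_1^2\,e^{-|X|^2/2}dv;
\]
summing over $A$ and using $\sum_A|\nabla x_A|^2 = n$ gives $\sum_A W_1^A = n$. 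Second, equation (3.19) at $i=1$ reads
\[
\sum_{A=1}^{n+p}\|u_1\mathfrak{L}x_A + 2\nabla x_A\!\cdot\!\nabla u_1\|^2 = \int_\Omega(2n-|X|^2)u_1^2 e^{-|X|^2/2}dv + 4\lambda_1 \leq (2n-\inf_\Omega|X|^2)+4\lambda_1.
\]

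To finish, I would apply the Rayleigh--Ritz principle for sums of eigenvalues: for any $n$-dimensional subspace $W\subset u_1^{\perp}\cap H^1_0(\Omega)$,
\[
\sum_{j=1}^n (\lambda_{j+1}-\lambda_1) \leq \operatorname{tr}(G_W^{-1}\tilde D_W),
\]
where $\tilde D_{W,AB}, G_{W,AB}$ are the restrictions of the shifted-energy and Gram bilinear forms to $W$. A direct calculation using integration by parts against $\mathfrak{L}(u_1^2) = -2\lambda_1 u_1^2 + 2|\nabla u_1|^2$ yields the clean formula
\[
\tilde D_{AB} = \int_\Omega u_1^2\,\nabla x_A\!\cdot\!\nabla x_B\,e^{-|X|^2/2}dv,
\]
so $\tilde D$ is positive semidefinite of trace $n$ and satisfies $\tilde D\preceq I$ (since the tangential projection $\Pi_p : \mathbb{R}^{n+p}\to T_pM$ obeys $\Pi_p\preceq I$ pointwise). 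Combining this spectral structure of $\tilde D$ with the pointwise Cauchy--Schwarz inequality $(W_1^A)^2\leq\|\varphi_A\|^2\,\|u_1\mathfrak{L}x_A+2\nabla x_A\!\cdot\!\nabla u_1\|^2$ and the two identities above produces the asserted bound.

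The principal obstacle is the last linear-algebraic reduction: extracting from the $n+p$ trial functions $\{\varphi_A\}$ an $n$-fold eigenvalue-gap sum bound with the precise constant $(2n-\inf_\Omega|X|^2)+4\lambda_1$ rather than a weaker $(n+p)/n$-inflated constant. The reduction should succeed because the effective rank of $\tilde D$ is controlled by $n$---the codimension-$p$ normal directions to $M^n$ carry no tangential information---and this is exactly the structural feature that produces the factor $n$ in both $\sum_A|\nabla x_A|^2 = n$ and $\operatorname{tr}(\tilde D) = n$.
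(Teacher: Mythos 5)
Your preparatory identities are all correct: with $\varphi_A=(x_A-c_{1A})u_1$ one does get $\tilde D_{AB}=\int_\Omega u_1^2\,\nabla x_A\cdot\nabla x_B\,e^{-|X|^2/2}dv$, hence $\operatorname{tr}\tilde D=n$ and $\tilde D\preceq I$, and the bound $\sum_A\|u_1\mathfrak Lx_A+2\nabla x_A\cdot\nabla u_1\|^2\le(2n-\inf_\Omega|X|^2)+4\lambda_1$ is exactly the paper's (5.9). But the step you flag as the ``principal obstacle'' is not a routine reduction --- it is where the entire content of the proof lives, and the route you sketch does not close it. Two things go wrong. First, your trial functions are orthogonal only to $u_1$, so the one-function Rayleigh--Ritz inequality yields only $\lambda_2-\lambda_1\le \tilde D_{AA}/G_{AA}$ for every $A$; no individual $\varphi_A$ ever ``sees'' $\lambda_3,\dots,\lambda_{n+1}$. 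Second, the trace principle $\sum_{j=1}^n(\lambda_{j+1}-\lambda_1)\le\operatorname{tr}(G_W^{-1}\tilde D_W)$ is valid, but $\operatorname{tr}(G_W^{-1}\tilde D_W)$ cannot be bounded by $(2n-\inf_\Omega|X|^2)+4\lambda_1$ without a lower bound on the Gram matrix $G_W$, which you do not have. The Cauchy--Schwarz inequality $(W_1^A)^2\le G_{AA}\,T_A$ (with $T_A=\|u_1\mathfrak Lx_A+2\nabla x_A\cdot\nabla u_1\|^2$) eliminates $G_{AA}$ only \emph{diagonally}, and only when chained with a per-index inequality $\lambda_{A+1}-\lambda_1\le\tilde D_{AA}/G_{AA}$. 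Also, your ``effective rank $n$'' heuristic is a pointwise statement about $\nabla x_A\cdot\nabla x_B$; the integrated matrix $\tilde D$ is generically of full rank $n+p$ (e.g.\ on $S^n(\sqrt n)$ one has $\nabla x_A\cdot\nabla x_B=\delta_{AB}-x_Ax_B/n$, whose integral against $u_1^2$ is invertible), so no $n$-dimensional subspace carries all of the trace.

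The missing idea is the Gram--Schmidt (QR) rotation of the coordinate functions: set $b_{AB}=\int_\Omega x_Au_1u_{B+1}e^{-|X|^2/2}dv$, choose an orthogonal $Q$ with $QB$ upper triangular, and put $y_A=\sum_Cq_{AC}x_C$. Then $\varphi_A=(y_A-a_A)u_1$ is orthogonal to $u_1,u_2,\dots,u_A$, so Rayleigh--Ritz gives the graded inequalities $(\lambda_{A+1}-\lambda_1)\int_\Omega\nabla y_A\cdot\nabla y_A\,u_1^2\,e^{-|X|^2/2}dv\le T_A$ for each $A=1,\dots,n+p$. Because the rotation is orthogonal, the two structural facts you isolated survive: $0\le\nabla y_A\cdot\nabla y_A\le1$ pointwise and $\sum_A\nabla y_A\cdot\nabla y_A=n$. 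Summing over $A$, splitting off $A>n$, and using $\lambda_{A+1}\ge\lambda_{n+1}\ge\lambda_{j+1}$ for $j\le n\le A$, one redistributes the weights $\int\nabla y_A\cdot\nabla y_A\,u_1^2$ to obtain $\sum_{j=1}^n(\lambda_{j+1}-\lambda_1)\le\sum_AT_A$, which is the claimed bound. Without the rotation, your argument can only produce $n(\lambda_2-\lambda_1)$ on the left-hand side.
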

\noindent

\begin{proof}
Let  $u_j$ be an  eigenfunction corresponding
to the eigenvalue $\lambda_j$ such that 
\begin{equation}
\begin{cases}
\begin{aligned}
&\mathfrak L u_j=- \lambda_j u_j \quad \text{in} \ {\Omega}\\
&u_j=0,  \ \ \text{on $\partial \Omega$}\\
&\int_{\Omega}u_iu_j \ e^{-\frac{|X|^2}{2}}dv=\delta_{ij}, \ \text{for any} \ i, j=1, 2, \cdots. 
\end{aligned}
\end{cases}
\end{equation}
We consider  an  $(n+p)\times (n+p)$-matrix $B=(b_{AB})$ defined
by 
$$
b_{AB}=\int_{\Omega} x_{A} u_1u_{B+1}e^{-\frac{|X|^2}{2}}dv.
$$
From the orthogonalization of Gram and Schmidt, there exist an upper triangle matrix $R=(R_{AB})$ and  an orthogonal matrix $Q=(q_{AB})$ such that
$R=QB$. Thus, 
\begin{equation}
R_{AB}=\sum_{C=1}^{n+p}q_{AC}b_{CB}
=\int_{\Omega} \sum_{C=1}^{n+p}q_{AC}x_C u_1u_{B+1}=0, \ \text{for} \ 1\leq B <A\leq n+p.
\end{equation}
Defining $y_A=\sum_{C=1}^{n+p}q_{AC}x_C$, we have 
\begin{equation}
\int_{\Omega}y_A u_1u_{B+1}=\int_{\Omega}\sum_{C=1}^{n+p}q_{AC}x_C u_1u_{B+1}=0, \ \text{for} \ 1\leq B <A\leq n+p.
\end{equation}
Therefore,  the  functions $\varphi_{A}$ defined by 
$$
\varphi_{A}=(y_A-a_A)u_1, \  a_A=\int_{\Omega} y_Au_1^2 \ e^{-\frac{|X|^2}{2}}dv,
\  \text{for}   \ \ 1\leq A \leq n+p
$$
satisfy
$$
\int_{\Omega}\varphi_{A}u_{B}=0,\qquad \mbox{for}\  1\leq B \leq A \leq n+p.
$$
Therefore, $\varphi_A$ is a trial function.
From the Rayleigh-Ritz inequality, we have, for $1\leq A\leq n+p$, 
\begin{equation}
\lambda_{A+1} \leq \frac{\displaystyle{-\int_{\Omega}}\varphi_A\mathfrak L \varphi_A  \ e^{-\frac{|X|^2}{2}}dv}
{\displaystyle{\int_{\Omega}}(\varphi_A)^2 \ e^{-\frac{|X|^2}{2}}dv}.
\end{equation}
From the definition of $\varphi_{A}$, we derive
\begin{equation*}
\begin{aligned}
&\mathfrak L \varphi_A=\Delta  \varphi_A-\langle X, \nabla  \varphi_A\rangle\\
&=\Delta \{(y_A-a_A) u_1\}-\langle X, \nabla   \{(y_A-a_A) u_1\}\rangle\\
&=y_A\mathfrak Lu_1 + u_1\mathfrak Ly_A +2\nabla y_A\cdot \nabla u_1 -a_A\mathfrak Lu_1\\
&=-\lambda_1y_Au_1-u_1y_A+2\nabla y_A\cdot \nabla u_1+a_A\lambda_1u_1.
\end{aligned}
\end{equation*}
Thus, (5.5) can be written as
\begin{equation}
(\lambda_{A+1}-\lambda_1)\|\varphi_{A}\|^2
\leq \int_{\Omega} \bigl(y_A u_1-2\nabla y_A\cdot\nabla u_1\bigl)\varphi_A \ e^{-\frac{|X|^2}{2}}dv.
\end{equation}
From the Cauchy-Schwarz inequality, we obtain
$$
\left(\int_{\Omega} \bigl(y_A u_1-2\nabla y_A\cdot\nabla u_1\bigl)\varphi_A \ e^{-\frac{|X|^2}{2}}dv\right)^2\leq
\|\varphi_{A}\|^2 \|y_Au_1-2\nabla y_A\cdot\nabla u_1\|^2.
$$
Multiplying the above inequality by $(\lambda_{A+1}-\lambda_1)$, we infer, from  (5.6), 
\begin{equation}
\begin{aligned}
&(\lambda_{A+1}-\lambda_1)\left(\int_{\Omega}\bigl(y_A u_1-2\nabla y_A\cdot\nabla u_1\bigl)\varphi_A \ e^{-\frac{|X|^2}{2}}dv\right)^2\\
 &\leq (\lambda_{A+1}-\lambda_1)\|\varphi_{A}\|^2 \|y_A u_1-2\nabla y_A\cdot\nabla u_1\|^2\\
&\leq \Big(\int_{\Omega} \bigl(y_A u_1-2\nabla y_A\cdot\nabla u_1\bigl)\varphi_A \ e^{-\frac{|X|^2}{2}}dv\Big)
 \|y_Au_1-2\nabla y_A\cdot\nabla u_1\|^2\\
\end{aligned}
\end{equation}
Hence, we derive 
\begin{equation}
\begin{aligned}
&(\lambda_{A+1}-\lambda_1)\int_{\Omega} 
\bigl(y_A u_1-2\nabla y_A\cdot\nabla u_1\bigl)\varphi_A \ e^{-\frac{|X|^2}{2}}dv
\leq   \|y_A u_1-2\nabla y_A\cdot\nabla u_1\|^2\\
\end{aligned}
\end{equation}
Since 
$$
\sum_{A=1}^{n+p} y_A^2= \sum_{A=1}^{n+p} x_A^2=|X|^2,
$$
we infer
\begin{equation}
\begin{aligned}
&\sum_{A=1}^{n+p} \|y_A u_1-2\nabla y_A\cdot\nabla u_1\|^2\\
&=\sum_{A=1}^{n+p}\int_{\Omega}\bigl(y_A ^2u_1^2
-4y_Au_1\nabla y_A\cdot\nabla u_1+4(\nabla y_A\cdot\nabla u_1)^2\bigl)
 \ e^{-\frac{|X|^2}{2}}dv\\
 &=\int_{\Omega} \bigl(|X|^2u_1^2
-\nabla |X |^2\cdot\nabla u_1^2+4\nabla u_1\cdot\nabla u_1\bigl)\ e^{-\frac{|X|^2}{2}}dv\\
&=\int_{\Omega} \bigl(|X|^2u_1^2+
\mathfrak L |X |^2 u_1^2+4\nabla u_1\cdot\nabla u_1\bigl)\ e^{-\frac{|X|^2}{2}}dv\\
&=\int_{\Omega} (2n-|X|^2)u_1^2\ e^{-\frac{|X|^2}{2}}dv+4\lambda_1\leq (2n-\inf_{\Omega} |X|^2)+4\lambda_1.
\end{aligned}
\end{equation}
\vskip2mm
\noindent
On the other hand, from the definition of $\varphi_A$, we have 
\begin{equation}
\begin{aligned}
&\int_{\Omega} \bigl(y_A u_1-2\nabla y_A\cdot\nabla u_1\bigl)\varphi_A \ e^{-\frac{|X|^2}{2}}dv\\
&=\int_{\Omega}\bigl(y_A^2 u_1^2-a_Ay_Au_1^2
+2a_Au_1\nabla y_A\cdot\nabla u_1-2y_Au_1\nabla y_A\cdot\nabla u_1\bigl)\ e^{-\frac{|X|^2}{2}}dv\\
&=\int_{\Omega} \bigl(y_A^2 u_1^2-a_Ay_Au_1^2
-a_A\mathfrak L y_Au_1^2+\frac{1}{2}\mathfrak L y_A^2 u_1^2\bigl)\ e^{-\frac{|X|^2}{2}}dv\\
&=\int_{\Omega} \bigl(y_A^2 u_1^2+\frac{1}{2}\mathfrak L y_A^2 u_1^2\bigl)\ e^{-\frac{|X|^2}{2}}dv\\
&=\int_{\Omega} \nabla y_A\cdot\nabla y_A u_1^2\ e^{-\frac{|X|^2}{2}}dv.\\
\end{aligned}
\end{equation}
For any point $p$,  we choose
a new coordinate system $\bar X=(\bar x_1,\cdots,\bar x_{n+p})$ of $\mathbf  R^{n+p}$
given by  $X-X(p)=\bar X O$ such that
 $(\frac {\partial }{\partial \bar x_1})_p,\cdots,
 (\frac {\partial }{\partial \bar x_n})_p$ span $T_pM^n$
 and at $p$, $g\Big(\frac{\partial}{\partial \bar x_i},\frac{\partial}{\partial \bar x_j}\Big)
 =\delta_{ij}$, 
 where  $O=(o_{AB})\in O(n+p)$ is an $(n+p)\times (n+p)$ orthogonal matrix.
 \begin{equation*}
 \begin{aligned}
 &\nabla y_A\cdot\nabla y_A=g(\nabla y_A, \nabla y_A)
 = \sum_{B,C=1}^{n+p}q_{AB}q_{AC}g(\nabla x_B, \nabla x_C)\\
&=\sum_{B,C=1}^{n+p}q_{AB}q_{AC}
g(\sum_{D=1}^{n+p}o_{DB}\nabla \bar x_{D}, \sum_{E=1}^{n+p}o_{EC}\nabla \bar x_E)\\
&=\sum_{B,C,D,E=1}^{n+p}q_{AB}o_{DB}q_{AC}o_{EC}
g(\nabla \bar x_{D},\nabla \bar x_E)\\
&=\sum_{j=1}^n\bigl(\sum_{B=1}^{n+p}q_{AB}o_{jB}\bigl)^2\leq 1
\end{aligned}
\end{equation*}
since $OQ$ is an   orthogonal matrix  if $Q$ and $O$ are orthogonal matrices, 
that is, we have  
 \begin{equation}
 \begin{aligned}
 \nabla y_A\cdot\nabla y_A\leq 1.
\end{aligned}
\end{equation}
Thus, we obtain, from (5.10) and (5.11),
\begin{equation}
\begin{aligned}
&\sum_{A=1}^{n+p}(\lambda_{A+1}-\lambda_1)
\int_{\Omega} \bigl(y_A u_1-2\nabla y_A\cdot\nabla u_1\bigl)\varphi_A \ e^{-\frac{|X|^2}{2}}dv\\
&=\sum_{A=1}^{n+p}(\lambda_{A+1}-\lambda_1)\int_{\Omega}  \nabla y_A\cdot\nabla y_A u_1^2\ e^{-\frac{|X|^2}{2}}dv\\
&=\sum_{j=1}^{n}(\lambda_{j+1}-\lambda_1)\int_{\Omega}  \nabla y_j\cdot\nabla y_j u_1^2\ e^{-\frac{|X|^2}{2}}dv\\
&+\sum_{A=n+1}^{n+p}(\lambda_{A+1}-\lambda_1)\int_{\Omega}  \nabla y_A\cdot\nabla y_Au_1^2\ e^{-\frac{|X|^2}{2}}dv\\
&\geq \sum_{j=1}^{n}(\lambda_{j+1}-\lambda_1)\int_{\Omega}\nabla y_j\cdot\nabla y_j  u_1^2\ e^{-\frac{|X|^2}{2}}dv\\
&+\sum_{A=n+1}^{n+p}(\lambda_{n+1}-\lambda_1)\int_{\Omega} \nabla y_A\cdot\nabla y_Au_1^2\ e^{-\frac{|X|^2}{2}}dv\\
&=\sum_{j=1}^{n}(\lambda_{j+1}-\lambda_1)\int_{\Omega} \nabla y_j\cdot\nabla y_j  u_1^2\ e^{-\frac{|X|^2}{2}}dv\\
&+(\lambda_{n+1}-\lambda_1)\int_{\Omega} (n-\sum_{j=1}^{n}\nabla y_j\cdot\nabla y_j ) u_1^2\ e^{-\frac{|X|^2}{2}}dv\\
\end{aligned}
\end{equation}
\begin{equation*}
\begin{aligned}
&=\sum_{j=1}^{n}(\lambda_{j+1}-\lambda_1)\int_{\Omega} \nabla y_j\cdot\nabla y_j  u_1^2\ e^{-\frac{|X|^2}{2}}dv\\
&+(\lambda_{n+1}-\lambda_1)\int_{\Omega}\sum_{j=1}^{n}(1-\nabla y_j\cdot\nabla y_j ) u_1^2\ e^{-\frac{|X|^2}{2}}dv\\
&\geq \sum_{j=1}^{n}(\lambda_{j+1}-\lambda_1).
\end{aligned}
\end{equation*}
According to (5.8), (5.9) and (5.12), we obtain
$$
\sum_{j=1}^{n}(\lambda_{j+1}-\lambda_1)\leq (2n-\inf_{\Omega} |X|^2)+4\lambda_1.
$$
This completes the proof of the proposition 5.1.
\end{proof}
\vskip 2mm

\noindent
{\it Proof of Theorem {\rm 5.2}}. By making use of the proposition 5.1 and the same proof as in the proof of the theorem 1.2,
we can prove the theorem 5.2  if one notices to count the number of eigenvalues from $1$.

\begin{flushright}
$\square$
\end{flushright}

\end{document}